\documentclass[a4paper,11pt]{amsart} 
\usepackage{amssymb, amsmath}
\usepackage{amscd}
\numberwithin{equation}{section}
\usepackage{epsfig}
\usepackage{amsmath}
\usepackage{amsfonts,amssymb,amsopn}

\usepackage{amsthm}
\usepackage{hyperref}
\usepackage{verbatim}
\usepackage{color}
\usepackage[color,all]{xy}

\newcommand{\cA}{A}

\newcommand{\cE}{{\mathcal E}}
\newcommand{\cF}{{\mathcal F}}

\newcommand{\cI}{{\mathcal I}}
\newcommand{\cL}{L}
\newcommand{\cO}{{\mathcal O}}
\newcommand{\cP}{{\mathcal P}}
\newcommand{\cQ}{{\mathcal Q}}
\newcommand{\cU}{{\mathcal U}}
\newcommand{\cV}{{\mathcal V}}

\newcommand{\C}{{\mathbb C}}
\newcommand{\PP}{{\mathbb P}}
\newcommand{\bT}{{\mathbb T}}
\newcommand{\bZ}{{\mathbb Z}}

\newcommand{\tW}{\widetilde{W}}

\newcommand{\Ker}{\mathrm{Ker}}
\newcommand{\Image}{\mathrm{Im}\,}

\newcommand{\Hom}{\mathrm{Hom}}
\newcommand{\rank}{\mathrm{rk}\,}
\newcommand{\Pic}{\mathrm{Pic}}
\newcommand{\ev}{\mathrm{ev}}
\newcommand{\isom}{\xrightarrow{\sim}}

\newcommand{\Oy}{{{\cO}_{Y}}}
\newcommand{\Kc}{K_{C}}
\newcommand{\Oc}{{{\cO}_{C}}}
\newcommand{\Ox}{{{\cO}_{X}}}

\newcommand{\gen}{{\mathrm{gen}}}

\newcommand{\Spec}{\mathrm{Spec}\,}

\newcommand{\Gr}{\mathrm{Gr}}

\newcommand{\st}{\mathrm{st}}

\newcommand{\pe}{\PP E^\vee}
\newcommand{\ope}{\cO_{\pe}}
\newcommand{\opeo}{{\ope (1)}}
\newcommand{\pf}{\PP F^\vee}

\newcommand{\opfo}{\cO_{\pf}(1)}
\newcommand{\pv}{\PP V^\vee}
\newcommand{\mlv}{M_{\cL, V}}

\newcommand{\moev}{M_{\opeo, V}}
\newcommand{\mofw}{M_{\opfo, W}}
\newcommand{\grdn}{G_0 (r, d, n)}

\newtheorem{theorem}{{\textbf Theorem}}[section]
\newtheorem{proposition}[theorem]{{\textbf Proposition}}
\newtheorem{corollary}[theorem]{{\textbf Corollary}}
\newtheorem{lemma}[theorem]{{\textbf Lemma}}
\newtheorem{remit}[theorem]{{\textbf Remark}}

\newcounter{tmp}

\title{Stability of kernel bundles on projective bundles over curves}

\author{Abel Castorena}
\address{Centro de Ciencias Matem\'aticas -- UNAM Campus Morelia, Antigua Carretera a P\'atzcuaro \# 8701, Col.\ Ex Hacienda San Jos\'e de la Huerta, Morelia, Michoac\'an, Mexico C.\ P.\ 58089.}
\email{abel@matmor.unam.mx}

\author{George H.\ Hitching}
\address{Oslo Metropolitan University, Postboks 4, St. Olavs plass, 0130 Oslo, Norway.}
\email{gehahi@oslomet.no}

\subjclass[2010]{14J60; 14H60; 14D20}

\keywords{syzygy bundle, projective bundle, stability, curve, coherent system}

\begin{document}

\begin{abstract}
Let $X$ be a projective bundle over a smooth curve $C$ of genus $g \ge 3$, and consider the relative hyperplane bundle $\Ox (1) \to X$. Let $V \subseteq H^0 ( X , \Ox (1) )$ be a generating subspace. We prove that when $C$, $X$ and $V$ are general in moduli and $\Ox (1)$ is sufficiently ample, the kernel bundle of the system $(\Ox (1) , V)$ is $\Ox (1)$-stable.
\end{abstract}

\maketitle

\section{Introduction}

Let $X$ be a projective variety and $\cL \to X$ a generated line bundle. For each generating subspace $V \subseteq H^0 (X, \cL )$ one has the exact sequence
\[
0 \ \to \ \mlv \ \to \ \Ox \otimes V \ \to \ \cL \ \to \ 0 .
\]
The bundle $\mlv$ is called the \textsl{kernel bundle} or \textsl{syzygy bundle} of the pair $( \cL , V)$. Fixing an ample bundle $\cA \to X$, it is a fundamental question to determine whether $\mlv$ is $\cA$-stable. (Typically, $A$ is a rational multiple of $\cL$.)

When $X$ is a curve, this is the well known \textsl{Butler conjecture} and dates back at least to \cite{But94, But97} (where in fact $\cL$ is replaced with an arbitrary rank bundle over $X$). This has been settled for $X$ and $( \cL , V)$ general in moduli in \cite{BN} (see also \cite{BBN}). Stability of $\mlv$ in various cases has found application to Brill--Noether loci in \cite{BN} and many other works, and to generalised theta divisors in \cite{Popa} and \cite{Bea}.

If $V = H^0 (X, \cL )$, we abbreviate $\mlv$ to $M_\cL$. For $\dim X \ge 2$, the $\cA$-stability of $M_\cL$ has been established in various cases for surfaces \cite{ELM} and \cite{TZ}; varieties of Picard number one \cite{ELM} and \cite{Sar}; abelian varieties \cite{CL}, and in general for $h^0 (X, \cL )$ sufficiently large \cite{Rek}. When $V$ may be a proper subspace of $H^0 (X, \cL )$, somewhat less is known; but in \cite{MS}, for $X = \PP^m \times \PP^n$, the $\cL$-stability of $\mlv$ is proven for many choices of $( \cL , V)$, and this is applied to questions of rigidity of $\mlv$ in moduli.

In the present work, we prove the stability of $\mlv$ in another situation where $V \subseteq H^0 (X, \cL )$ may be a proper subspace. 

\begingroup
\setcounter{tmp}{\value{theorem}}
\setcounter{theorem}{0}
\renewcommand\thetheorem{\Alph{theorem}}

\begin{theorem} \label{main}
Let $C$ be a curve of genus $g \ge 3$ which is general in moduli. Fix integers $r$, $n$ and $d$ satisfying
\begin{equation} \label{NumHyp}
n \ \ge \ r + 1 \quad \hbox{and} \quad d \ \ge \ (n + r - 1)(g - 1) + (n - 1) .
\end{equation}
Then for a general bundle $E \to C$ of rank $r$ and degree $d$ and a general generating subspace $V \subseteq H^0 ( \pe , \opeo )$, the kernel bundle $\mlv \to \pe$ is $\opeo$-stable.
\end{theorem}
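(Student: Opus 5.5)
The plan is to reduce stability of $\moev$ on $X = \pe$ to stability of the kernel bundle $M_{E,V}$ on $C$, using the relative structure of $\pi \colon \pe \to C$. Write $H = c_1(\opeo)$ and $f$ for the fibre class. Then $\pi_* \opeo = E$ and $V \subseteq H^0(\pe, \opeo) = H^0(C, E)$ generates $E$, with $\dim V = n$. The relevant intersection numbers are $H^{r} = d$, $H^{r-1} f = 1$ and $f^2 = 0$.

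\emph{Step 1: a relative filtration of $\moev$.} Pushing forward $0 \to \moev \to \ope \otimes V \to \opeo \to 0$ gives $\pi_* \moev = M_{E,V}$, the kernel of $\Oc \otimes V \to E$, of rank $n-r$ and degree $-d$. The evaluation map $\pi^* M_{E,V} \to \moev$ is injective, and comparing with the relative Euler sequence gives
\[ 0 \to \pi^* M_{E,V} \to \moev \to \Omega_{X/C}(1) \to 0 . \]
On a fibre $f \cong \PP^{r-1}$ this restricts to $\moev|_f \cong \cO^{\,n-r} \oplus \Omega_{\PP^{r-1}}(1)$. We also have $c_1(\moev) = -H$, so the slope is $\mu(\moev) = -d/(n-1)$.

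\emph{Step 2: stability on $C$.} Using $g \ge 3$, $C$ general and the bound (\ref{NumHyp}) (with $n \ge r+1$), I would invoke the known stability results for kernel bundles of general generated coherent systems $(E,V)$ of type $(r,d,n)$, as in \cite{BN}, \cite{BBN} and the coherent-systems literature. These give that $M_{E,V}$ is stable for general $(E,V)$, and $E$ is stable as well. The inequality on $d$ is presumably exactly what puts $(E,V)$ in the range where such a general coherent system exists and has stable kernel.

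\emph{Step 3: testing saturated subsheaves.} Let $F \subset \moev$ be saturated of rank $s$, $0 < s < n-1$, and write $c_1(F) = aH + bf$, so that $\deg_H F = ad + b$. Restricting to a general fibre, $F|_f$ is a subsheaf of $\cO^{n-r} \oplus \Omega(1)$. Since $\Omega(1)$ is stable of negative slope on $\PP^{r-1}$, this forces $a \le 0$, and $a = 0$ exactly when the image of $F$ in $\Omega_{X/C}(1)$ has rank $0$ or $< r-1$ and degree $0$ on fibres. I would then split into cases.

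\begin{itemize}
\item \textbf{Case $a = 0$ with image $0$.} Here $F = \pi^* G$ for a subsheaf $G \subset M_{E,V}$. Stability of $M_{E,V}$ gives $b/s < -d/(n-r) < -d/(n-1)$.
\item \textbf{Case $a \le -1$.} This occurs when $F$ surjects generically onto $\Omega_{X/C}(1)$. I would pass to the quotient $\moev/F$, which is then a quotient of $\pi^*M_{E,V}$ and hence of the form $\pi^*Q$ away from codimension two. Stability of $M_{E,V}$ bounds $\deg Q$ from below, and this gives the required inequality for $F$.
\item \textbf{Intermediate cases.} Here $F \cap \pi^* M_{E,V} = \pi^* G$ and the image of $F$ in $\Omega_{X/C}(1)$ is a proper, fibrewise nontrivial subsheaf. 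One needs a bound on $b$ in terms of $\deg G$ and $a$.
\end{itemize}

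I expect the intermediate cases to be the main obstacle. For these I would try to use that the image is $\Omega_{X/C}(1)$-destabilizing on fibres, so $a \le -1$ anyway. Combining this with $d$ large compared to $(n+r-1)(g-1)$ should let the term $ad$ dominate $b$. To do this I would bound $b$ by pushing forward twists of $F$ and using stability, and hence the bounded maximal-subbundle degrees, of the general $E$ and $M_{E,V}$. Genericity of $V$ would be used to rule out special elementary transformations along fibres that could make $b$ large.
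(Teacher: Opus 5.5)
\textbf{There is a genuine gap in Step 3.} Your Step 1 is correct: the sequence $0\to\pi^*M_{E,V}\to\moev\to\Omega_{X/C}(1)\to0$ holds, and so does the fibrewise splitting $\cO^{n-r}\oplus\Omega_{\PP^{r-1}}(1)$. The problems lie in Step 3, which is exactly where the difficulty of the theorem sits.

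(i) Your case split assumes that $F\cap\pi^*M_{E,V}$ is a pullback $\pi^*G$. This is false in general. A saturated subsheaf of $\pi^*M_{E,V}$ can restrict on a general fibre to, say, $\cO(-1)\subset\cO^{\oplus 2}$. So $a<0$ can occur even when the image of $F$ in $\Omega_{X/C}(1)$ is zero, and you never treat that case. Likewise, in your case ``$a\le -1$'' the quotient $\moev/F$ need not be of the form $\pi^*Q$. Also, $a=0$ with nonzero image is impossible, since every nonzero subsheaf of $\Omega_{\PP^{r-1}}(1)$ has negative $c_1$.

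(ii) Even when $\moev/F\cong\pi^*Q$, stability of $M_{E,V}$ only gives $\mu(Q)>-d/(n-r)$. For $r\ge 2$ this is strictly weaker than the required $\mu(Q)>-d/(n-1)$. When $n\ge r+2$ and $d$ is large, a stable $M_{E,V}$ really does have quotients with slope in that window, by the Mukai--Sakai bound. What excludes them is a different argument. Since $R\pi_*\ope(-1)=0$, we have $\mathrm{Ext}^1(\opeo,\pi^*Q)=0$. Hence any map $\moev\to\pi^*Q$ extends to $\ope\otimes V\to\pi^*Q$, which forces $\deg Q\ge 0$.

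(iii) The intermediate cases, which are the heart of the matter, are only sketched as a hope. The natural way to bound $b$ is to embed $\det F$ into a graded piece $\pi^*\wedge^{s-j}M_{E,V}\otimes\Omega^j_{X/C}(j)$ of $\wedge^s\moev$ and use semistability of Schur powers of $E$ and $M_{E,V}$. For $a=-1$ and $j=r-1$ this gives only $\deg_H F\le -(s-r+1)d/(n-r)$. That is \emph{not} below $-sd/(n-1)$ when $s<n-1$. You have not identified what extra input would close this, nor how genericity of $V$ would enter. Separately, Step 2 needs Butler's conjecture for general coherent systems of rank $r\ge 2$. The paper uses only the rank-one case \cite[Theorem 1.1]{BN}, and its induction is designed precisely to avoid the higher-rank case.

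\textbf{How the paper avoids this.} It uses induction on $r$ through hyperplane sections rather than through the fibration over $C$. Given $G\subset\moev$ and a general point $x$, it chooses $\sigma\in V$ vanishing at $x$ with $\sigma\notin G|_x$. The divisor $Y$ of $\sigma$ is $\PP\bigl(E/\sigma(\Oc)\bigr)^\vee$. The restriction $G|_Y$ then injects into the kernel bundle of $\bigl(\cO_Y(1),V/\C\sigma\bigr)$. Mistretta's argument gives $\deg_{\opeo}G=\deg_{\cO_Y(1)}(G|_Y)$. The inductive hypothesis applies because the quotient system is general, by Corollary \ref{GammaDom}, and it yields $\mu(G)\le -d/(n-2)<-d/(n-1)$. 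No analysis of how subsheaves sit relative to the fibration is needed.
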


\endgroup

The meaning of ``general'' will be made more precise in {\S} \ref{section:Irred}, where the irreducibility of certain moduli spaces of coherent systems over $C$ is established. Theorem \ref{main} is proven by induction on $r$. Butler's conjecture as proven in \cite{BN} furnishes the case $r = 1$. To reduce from dimension $r$ to $r - 1$, we utilise the fact that a projective bundle $\pe \to C$ admits divisors which are themselves projective bundles with similar properties to $\pe$. A strategy inspired by \cite{Cam} and \cite{ELM}, together with results of \cite{Mis}, allows us to analyse the restriction of $\mlv$ to such a projective subbundle and infer the desired stability statement.


\subsection*{Acknowledgements}

The first author is supported with grant PAPIIT IN101226 ``Teor\'{\i}a de Brill--Noether y aplicaciones'' from UNAM, M\'exico. He also thanks Oslo Metropolitan University for hospitality during a visit in October 2025.

\subsection*{Notation}

We work throughout over the field $\C$. If $A \times B$ is a product, we denote by $\pi_A$ and $\pi_B$ respectively the projections to the factors $A$ and $B$. If $E$ is a vector bundle, we write $\pe$ for the projective bundle of hyperplanes in fibres of $E$.

\section{Preliminaries} \label{section:Prelims}

In this section we recall familiar material on stability of torsion free sheaves and on bundles and coherent systems over curves.

\subsection{Stability of torsion free sheaves} \label{subsection:Stability}

Let $X$ be a smooth projective variety of dimension $r \ge 1$, and $p \colon X \to \Spec \C$ the structure morphism. For a $0$-cycle $\gamma \in A_0 (X)$, as in \cite[p.\ 13]{Ful} we write
\[
\int_X \gamma \ := \ p_* \gamma \ \in \ A_0 ( \Spec \C ) \ = \ \bZ .
\]
For $\cA \to X$ an ample line bundle, $c_1 ( \cA ) \in A^1 (X)$ is the polarisation associated to $\cA$. Then the \textsl{$\cA$-degree} and \textsl{$\cA$-slope} of a torsion free sheaf $F$ on $X$ are defined as
\begin{equation} \label{DefnDegSlope}
\deg_\cA F \ := \ \int_X c_1 (F) \cdot c_1 (\cA)^{r-1} \quad \hbox{and} \quad \mu_\cA (F) \ := \ \frac{\deg_\cA F}{\rank F}
\end{equation}
respectively. Such an $F$ is \textsl{$\cA$-semistable} if $\mu_\cA (G) \le \mu_\cA (F)$ for all proper nonzero subsheaves $G$ of $F$, and \textsl{$\cA$-stable} if inequality is strict for all such $G$.

Note that if $\dim X = 1$, then $c_1 (F)$ is already a $0$-cycle. Here $\deg_\cA F$ reduces to $\int_X c_1 (F)$, which is independent of $\cA$. Thus we  write simply $\deg F$ and $\mu (F)$ in (\ref{DefnDegSlope}).

\subsection{Generated coherent systems}

By a \textsl{coherent system} over a variety $X$, we will understand a pair $(E, V)$ where $E \to X$ is a vector bundle and $V$ a subspace of $H^0 (X, E)$. Such an $(E, V)$ is \textsl{generated} if the evaluation map $V \to E|_x$ is surjective for all $x \in X$.

A \textsl{family of coherent systems over $X$ parametrised by a base $B$} is a pair $(\cE, \cV)$ where $\cE \to B \times X$ is a vector bundle and $\cV$ a locally free subsheaf of $\left( \pi_B \right)_* \cE$. The following two results are well known, but as we did not find a suitable reference, we give proofs.

\begin{lemma} \label{BgenOpen}
Let $X$ be a smooth complete variety. Let $(\cE, \cV )$ be a family of coherent systems over $X$ parametrised by a base $B$. Set
\[
B_\gen \ := \ \left\{ b \in B : \left( \cE_b , \cV_b \right) \hbox{ is generated} \right\} .
\]
Then $B_\gen$ is an open subset of $B$.
\end{lemma}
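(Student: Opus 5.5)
The idea is to describe the non-generated locus as the image of a closed set under a proper map. Consider the evaluation morphism of sheaves on $B \times X$,
\[
\ev \colon \pi_B^* \cV \ \to \ \cE ,
\]
obtained by composing $\pi_B^* \cV \to \pi_B^* (\pi_B)_* \cE$ with the natural adjunction map $\pi_B^* (\pi_B)_* \cE \to \cE$. Both sheaves are locally free, of ranks $k := \rank \cV$ and $e := \rank \cE$. The cokernel $\mathcal{Q} := \mathrm{Coker}(\ev)$ is a coherent sheaf on $B \times X$. I would set
\[
Z \ := \ \mathrm{Supp}\, \mathcal{Q} \ \subseteq \ B \times X .
\]
This is closed. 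Equivalently, $Z$ is the locus where the $e \times e$ minors of a local matrix for $\ev$ all vanish, i.e.\ where $\ev$ has rank $< e$.

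Next I would check the fibrewise identification. For $(b, x) \in B \times X$, right exactness of the tensor product gives $\mathcal{Q} \otimes k(b,x) = \mathrm{Coker}\left( \cV \otimes k(b) \to \cE|_{(b,x)} \right)$. Here the map is the evaluation of sections of $\cV_b \subseteq H^0 (X, \cE_b)$ at $x$. A small point needs care: the fibre $\cV \otimes k(b)$ must be read as the subspace $\cV_b$ of $H^0 (X, \cE_b)$ via base change. I would take this as part of what ``$\cV_b$'' means in the definition of a family. In any case the map factors through the evaluation at $x$ of sections of $\cE_b$, which is all that matters. By Nakayama, $(b,x) \in Z$ if and only if $\cV_b \to \cE_b|_x$ fails to be surjective. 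Hence
\[
B \setminus B_\gen \ = \ \pi_B (Z) .
\]

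Finally, $X$ is complete, so $\pi_B \colon B \times X \to B$ is proper, in particular universally closed. Therefore $\pi_B (Z)$ is closed in $B$, and $B_\gen$ is open.

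The only step requiring any attention is the identification of fibres of $\cV$ with the subspaces $\cV_b$, since $(\pi_B)_*$ need not commute with base change. As noted, surjectivity at $(b,x)$ only uses the composite map to $\cE|_{(b,x)}$, so the argument goes through with $\cV_b$ understood as the image of $\cV \otimes k(b)$. The rest is formal: closedness of the support of a coherent sheaf, plus properness of the projection.
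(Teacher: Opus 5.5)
Your proposal is correct and follows essentially the same route as the paper: the non-generated locus is the projection to $B$ of the closed (determinantal) locus in $B \times X$ where $\ev \colon \pi_B^* \cV \to \cE$ fails to be surjective, and this projection is closed because $X$ is complete. Your extra remark, that $\cV_b$ should be read as the image of $\cV \otimes k(b)$ in $H^0(X, \cE_b)$ since pushforward need not commute with base change, is a point the paper passes over silently, and it does not affect the argument.
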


\begin{proof}
Recall the canonical evaluation map $\ev \colon \pi_B^* \cV \to \pi_B^* \left( \pi_B \right)_* \cE \to \cE$. As $\pi_B^* \cV$ is locally free on $B$, the set
\[
\left\{ (b, x) \in B \times X : \ev|_{(b, x)} \colon \cV_b \to \cE_b|_x \hbox{ is not surjective} \right\}
\]
is a closed (indeed, determinantal) subvariety of $B \times X$. Its image in $B$ is exactly the complement of $B_\gen$. As $X$ is complete, this image is closed. Thus $B_\gen$ is open in $B$.
\end{proof}

The following is an easy adaptation of \cite[Lemma 2.1 (i)]{ChH}.

\begin{lemma} \label{NonVan}
Let $X$ be a variety of dimension $m$. Let $E \to X$ be a vector bundle of rank $r \ge m + 1$ and $V \subseteq H^0 (X, E)$ a generating subspace. Then a general element of $V$ does not vanish at any point of $X$.
\end{lemma}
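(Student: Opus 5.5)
This is a standard incidence-variety dimension count. Consider
\[
Z \ := \ \left\{ (s, x) \in V \times X : s(x) = 0 \right\} \ \subseteq \ V \times X ,
\]
a closed subvariety, with the two projections $\pi_V \colon Z \to V$ and $\pi_X \colon Z \to X$. We will show that $\dim Z \le \dim V - 1$. Then $\pi_V(Z)$ is a constructible subset of dimension at most $\dim V - 1$, hence is not dense in $V$. Its closure is therefore a proper closed subset, and any $s$ outside it vanishes nowhere on $X$, which is exactly the statement.

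To bound $\dim Z$, look at the fibres of $\pi_X$. For each $x \in X$, the fibre $\pi_X^{-1}(x)$ is the linear subspace $\Ker \left( \ev_x \colon V \to E|_x \right)$. Since $(E, V)$ is generated, $\ev_x$ is surjective for every $x$. So every fibre is a linear space of dimension exactly $\dim V - r$; in fact $Z$ is the total space of the kernel bundle of $\Ox \otimes V \to E$. Hence $Z$ is irreducible of dimension $m + \dim V - r$ when $X$ is irreducible. If $X$ is not irreducible, we apply the same argument to each component, giving the same dimension bound. The hypothesis $r \ge m + 1$ now yields
\[
\dim Z \ = \ \dim V - (r - m) \ \le \ \dim V - 1 ,
\]
as required. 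Completeness of $X$ is not needed, because we only use that the image of $\pi_V$ is constructible (Chevalley), not that it is closed.

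The only point requiring care is the uniformity of the fibre dimension, and this is exactly where generation enters: surjectivity of $\ev_x$ at every point, not merely at a general point, is what lets us bound $\dim Z$ by the fibre dimension plus $\dim X$. The rest is routine, and mirrors \cite[Lemma 2.1 (i)]{ChH}.
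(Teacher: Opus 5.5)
Your proposal is correct and is essentially the paper's argument. The paper also uses generation to see that the sections vanishing at each $x$ form a subspace of dimension $\dim V - r$, so their union over $x \in X$ has dimension at most $\dim V - r + m < \dim V$; you just make this count explicit via the incidence variety $Z \subseteq V \times X$ and Chevalley's theorem.
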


\begin{proof}
As $E$ is generated, for any $x \in X$ the subspace $V \cap H^0 (X, E \otimes \cI_x )$ of sections vanishing at $x$ has dimension $\dim V - r$. Thus the locus
\[
\bigcup_{x \in X} \left( V \cap H^0 (X, E \otimes \cI_x ) \right)
\]
of sections vanishing at some point of $X$ has dimension at most $\dim V - r + m < \dim V$. The statement follows.
\end{proof}


\subsection{Moduli of coherent systems over a curve} \label{subsection:CohSys}

Let $C$ be a projective smooth curve of genus $g \ge 3$. The \textsl{type} of a coherent system $(E, V)$ over $C$ is the triple $(\rank E , \deg E , \dim V )$. Let $\alpha$ be a positive real number. Then $(E, V)$ is said to be \textsl{$\alpha$-semistable} if for each proper subbundle $F \subset E$ and subspace $W \subseteq V \cap H^0 (C, F)$ we have
\[
\frac{\deg F + \alpha \cdot \dim W}{\rank F} \ \le \ \frac{\deg E + \alpha \cdot \dim V}{\rank E} .
\]
The system is \textsl{$\alpha$-stable} if inequality is strict for all $(F, W)$. By \cite{KN}, there is a quasiprojective moduli space $G (r, d, n; \alpha )$ parametrising $\alpha$-stable coherent systems of type $(r, d, n)$. 

\begin{lemma} \label{alphasstability}
Suppose that $\alpha$ is close to $0$. Let $(E, V)$ be a coherent system over $C$. If $E$ is a stable vector bundle, then $(E, V)$ is $\alpha$-stable. Conversely, if $(E, V)$ is $\alpha$-stable, then $E$ is a semistable vector bundle.
\end{lemma}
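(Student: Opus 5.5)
The statement depends on a type $(r,d,n)$ and an $\alpha$. I interpret ``$\alpha$ close to $0$'' to mean $0<\alpha<\alpha_0$, where $\alpha_0>0$ depends only on that type. The plan is a direct comparison of slopes. The key observation is that the quantities $\deg F$, $\rank F$ and $\dim W$ are integers with bounded ranges, so the perturbation by $\alpha$ can be controlled uniformly. Write $\mu_\alpha(F,W) := (\deg F + \alpha \dim W)/\rank F$, with $\mu_\alpha(E,V)$ defined in the same way. Throughout, $F$ ranges over proper subbundles of $E$, so $1 \le \rank F \le r-1$, and $0 \le \dim W \le n$.

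For the first implication, suppose $E$ is stable, so $\mu(F) < \mu(E)$ for every proper subbundle $F$. Clearing denominators, the integer $d\,\rank F - \deg F \cdot r$ is positive, hence at least $1$. This gives
\[
\mu(E) - \mu(F) \ \ge \ \frac{1}{r \cdot \rank F} \ \ge \ \frac{1}{r(r-1)} .
\]
The $\alpha$-terms are bounded: we have $\alpha \dim W/\rank F \le \alpha n$ and $\alpha n/r \ge 0$. It follows that
\[
\mu_\alpha(E,V) - \mu_\alpha(F,W) \ \ge \ \frac{1}{r(r-1)} - \alpha n ,
\]
which is positive as soon as $\alpha < 1/(n r (r-1))$. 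Hence $(E,V)$ is $\alpha$-stable for every such $\alpha$.

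For the converse, suppose $(E,V)$ is $\alpha$-stable and $F \subset E$ is a proper subbundle. Take $W = 0$ in the definition of $\alpha$-stability. This gives
\[
\mu(F) \ < \ \mu(E) + \frac{\alpha n}{r} .
\]
If $\mu(F) > \mu(E)$ held, the same integrality argument would give $\mu(F) - \mu(E) \ge 1/(r(r-1))$. That contradicts the displayed inequality once $\alpha n / r < 1/(r(r-1))$. Therefore $\mu(F) \le \mu(E)$ for all proper subbundles, and $E$ is semistable. This conclusion holds for all $\alpha < 1/(n(r-1))$.

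Both arguments therefore work with $\alpha_0 = 1/(n r(r-1))$. There is no real obstacle in this proof. The only points needing care are the following.
\begin{itemize}
\item The bound must be uniform in $F$ and $W$. This is supplied by the integrality of $d\,\rank F - r \deg F$ together with the bounds $\rank F \le r-1$ and $\dim W \le n$.
\item Saturation is harmless, since stability of $E$ may be tested on subbundles.
\item The converse can give only semistability. When $\mu(F) = \mu(E)$, the $\alpha$-term can still make $(E,V)$ $\alpha$-stable even though $E$ is strictly semistable.
\end{itemize}
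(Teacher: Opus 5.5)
Your proof is correct. The paper gives no argument of its own here; it cites \cite[p.\ 3]{But97} for the statement and \cite[Lemma 3.1]{CH} for the details, so your write-up is a self-contained replacement for that citation.

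Your mechanism is the standard one. A nonzero integer $d\,\rank F - r\deg F$ has absolute value at least $1$, so every nonzero slope gap $|\mu(E)-\mu(F)|$ is at least $1/(r(r-1))$. An $\alpha$-perturbation of size at most $\alpha n$ cannot reverse it. What your route buys over the citation is an explicit threshold.

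Two minor remarks. First, your formula $\alpha_0 = 1/(nr(r-1))$ tacitly needs $r \ge 2$ and $n \ge 1$. The remaining cases are trivial: for $r=1$ there are no proper subbundles, and for $n=0$ $\alpha$-stability coincides with stability.

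Second, you can sharpen the first direction by keeping the two $\alpha$-terms together. The numerator of $\mu_\alpha(E,V)-\mu_\alpha(F,W)$ is $(d\,\rank F - r\deg F) + \alpha(n\,\rank F - r\dim W)$. Since $\rank F \ge 1$ and $\dim W \le n$, this is at least $1 - \alpha n(r-1)$. So the first direction also holds for all $\alpha < 1/(n(r-1))$, matching the bound you already obtained for the converse. Either way, $\alpha$-stability does not change for $\alpha$ between $0$ and the first critical value. Hence your conclusion applies throughout the range of $\alpha$ used to define $\grdn$.
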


\begin{proof}
This is stated in \cite[p.\ 3]{But97}. Details can be found in \cite[Lemma 3.1]{CH}.
\end{proof}

Following \cite{BMNO}, for $\alpha$ close to zero, we denote $G(r, d, n; \alpha)$ by $\grdn$, and set
\[
S_0 (r, d, n) \ := \ \{ (E, V) \hbox{ $\alpha$-stable and generated of type } (r, d, n) \} \ \subseteq \ \grdn .
\]
By Lemma \ref{BgenOpen}, this is open in each component of $\grdn$.

\subsection{Projective bundles over a curve}

A vector bundle $E \to C$ gives rise to a projective bundle $\pe$, which we denote by $\rho \colon X \to C$. Over $X$ we have the relative hyperplane bundle $\Ox (1)$, which satisfies $\rho_* \Ox (1) \cong E$. Via adjunction, 
 we have a natural identification
\[
H^0 (C, E) \ \cong \ H^0 (X , \Ox (1) ) .
\]
Thus each subspace $V \subseteq H^0 (C, E)$ is canonically identified with a subspace of $H^0 (X, \Ox (1) )$, which we also denote by $V$. In this way, a coherent system $(E, V)$ over $C$ defines a projective bundle $X = \pe$ together with a linear system $(\Ox (1) , V)$ over $X$.

\begin{lemma} \label{c1Lr}
Let $C$, $E$ and $X$ be as above, and suppose that $V$ generates $\Ox (1)$. Then $c_1 ( \Ox (1) )^r = \deg E$.
\end{lemma}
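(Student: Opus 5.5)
Since $V$ generates $\Ox(1)$ and $\dim X = r$, the plan is to realise $c_1(\Ox(1))^r$ as the class of a finite set of points cut out by $r$ general sections, and then to count those points fibrewise on $C$. First, choose general $s_1, \ldots, s_{r-1} \in V$. Applying the Bertini-type argument of Lemma \ref{NonVan} iteratively (or Kleiman--Bertini for the morphism $X \to \PP V^\vee$ given by $V$), the common zero locus $Z = Z(s_1, \ldots, s_{r-1})$ is either empty or a reduced curve of class $c_1(\Ox(1))^{r-1}$.

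The cleaner route is to work fibrewise. The sections $s_1, \ldots, s_{r-1}$ correspond to $r-1$ sections of $E$ over $C$. On a fibre $\rho^{-1}(x) = \PP(E_x^\vee)$, a section $s$ vanishes exactly along the hyperplane determined by $s(x) \in E_x$ (or everywhere, if $s(x) = 0$). Therefore $Z \cap \rho^{-1}(x)$ is the linear subspace annihilated by $\langle s_1(x), \ldots, s_{r-1}(x) \rangle$. This is a single point precisely when the $s_i(x)$ are linearly independent, i.e.\ when $s_1 \wedge \cdots \wedge s_{r-1} \in H^0(C, \wedge^{r-1} E)$ is nonzero at $x$.

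Next, take $s_r \in V$ general. Then $c_1(\Ox(1))^r$ is represented by the $0$-cycle $Z(s_1, \ldots, s_r)$. This cycle lies over the points $x \in C$ where $s_1(x), \ldots, s_r(x)$ are linearly dependent, i.e.\ over the zeros of $s_1 \wedge \cdots \wedge s_r \in H^0(C, \det E)$. That section has exactly $\deg \det E = \deg E$ zeros counted with multiplicity, provided it is not identically zero.

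Two genericity facts must then be checked.
\begin{enumerate}
\item[(a)] For general $s_1, \ldots, s_r \in V$, the wedge $s_1 \wedge \cdots \wedge s_r$ is a nonzero section of $\det E$. Since $V$ generates $E$, at a general point $x$ the map $V \to E_x$ is surjective, so general elements of $V$ give a basis of $E_x$.
\item[(b)] The wedge has simple zeros, and at each zero $x$ the dependency locus $Z \cap \rho^{-1}(x)$ is a single reduced point. This amounts to $s_1(x), \ldots, s_r(x)$ spanning an $(r-1)$-dimensional space. Its failure is a codimension-$\ge 2$ condition in $V^{\oplus r}$ over each point. Since $C$ is one-dimensional, a dimension count in the style of Lemma \ref{NonVan} excludes it for a general choice.
\end{enumerate}
The point of $Z(s_1, \ldots, s_r)$ over such an $x$ is the unique hyperplane containing the span, so it is a transverse point. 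Hence $\int_X c_1(\Ox(1))^r = \deg E$.

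Rather than proving multiplicity-one transversality by hand, an alternative is to compute intersection multiplicities locally. Over a neighbourhood of $x$, trivialise $E$ and write the sections in coordinates. The scheme $Z(s_1, \ldots, s_r)$ is then the scheme of a section of $\Ox(1)^{\oplus r}$. Its length over $x$ equals the order of vanishing of the determinant, by the standard Fitting-ideal or Cramer computation. This removes the need for simplicity of the zeros. I expect this local multiplicity comparison to be the main obstacle. As a sanity check, and as a fallback, the Grothendieck relation $\sum_i c_1(\Ox(1))^{r-i} \rho^* c_i(E) = 0$ in $A^*(X)$ gives $\xi^r = \xi^{r-1} \rho^* c_1(E)$ (with the sign fixed by the hyperplane convention for $\pe$). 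Since $\xi^{r-1}$ has degree $1$ on a fibre, this yields $\deg E$ directly. I would present this Chern-class argument as the proof, and use the generation hypothesis only to confirm the sign convention via the positive count above.
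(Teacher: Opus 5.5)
Your proof is correct, and the argument you settle on differs genuinely from the paper's. The paper uses generation to get $\psi \colon X \to \pv$. It reads $c_1(\Ox(1))^r$ as the number of points cut out by $r$ general hyperplanes, and then asserts without further argument that this number is $\deg \psi(X) = \deg E$.

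Your Grothendieck-relation argument is purely formal and never uses generation, so it proves the identity for every bundle $E$ over $C$. You also do not need the geometric count to fix the sign. Write $\xi = c_1(\Ox(1))$. Since $\rho_*\Ox(1) \cong E$, the tautological map $\rho^*E \to \Ox(1)$ is surjective with kernel $K$ of rank $r-1$. Then $c_r(K) = 0$ reads $\sum_{i=0}^{r} (-1)^i \rho^*c_i(E)\,\xi^{r-i} = 0$. On a curve this gives $\xi^r = \rho^*c_1(E)\cdot\xi^{r-1}$, and hence $\int_X \xi^r = +\deg E$.

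Your fibrewise route supplies exactly the justification the paper's last equality omits. Your Smith-normal-form computation shows that $Z(s_1,\ldots,s_r) = \psi^{-1}(H_1\cap\cdots\cap H_r)$ has one point over each zero of $s_1\wedge\cdots\wedge s_r \in H^0(C,\det E)$, of length equal to the vanishing order. This also makes clear that one must count points of $\psi^{-1}(H_1\cap\cdots\cap H_r)$ rather than of $H_1\cap\cdots\cap H_r\cap\psi(X)$. These differ when $\psi$ is not birational onto its image, for example when $\dim V = r+1$ (allowed in Theorem \ref{main} with $n = r+1$), where $\psi(X) = \pv \cong \PP^r$ has degree $1$.

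One small correction to your item (b). The condition that $s_1(x),\ldots,s_r(x)$ span an $(r-1)$-dimensional space gives a single point in each fibre, but it does not by itself give simple zeros of the determinant. So on the geometric route it is the local length computation, not (b), that carries the argument.
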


\begin{proof}
Let $\psi \colon X \to \pv$ be the natural morphism. Choose general elements $H_1 , \ldots , H_r$ of $\PP V$, viewed as hyperplanes in $\pv$. Then
\[
c_1 ( \Ox (1) )^r \ = \ \# \left( H_1 \cap \cdots \cap H_r \cap \psi (X) \right) \ = \ \deg \psi (X) \ = \ \deg E . \qedhere
\]
\end{proof}

\section{Irreducibility of \texorpdfstring{$S_0 (r, d, n)$}{S\_0 (r, d, n)}} \label{section:Irred}

\noindent We continue to assume that $C$ is a smooth projective curve of genus $g \ge 3$.

\begin{lemma} \label{h1vanishing}
Suppose that $E \to C$ is a semistable bundle with $\mu (E) > 2g - 1$. 
\begin{enumerate}
\item[(a)] The bundle $E$ is generated.
\item[(b)] The cohomology group $H^1 (C, E)$ is zero.
\end{enumerate}
\end{lemma}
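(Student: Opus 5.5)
The plan is to prove (b) first by Serre duality and then deduce (a) from (b) applied to twists by $\cO_C(-x)$.

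For (b), Serre duality gives $H^1 (C, E) \cong H^0 (C, E^\vee \otimes \Kc)^\vee$. The bundle $E^\vee \otimes \Kc$ is semistable, because dualising and tensoring by a line bundle both preserve semistability. Its slope is
\[
\mu ( E^\vee \otimes \Kc ) \ = \ 2g - 2 - \mu (E) \ < \ -1 \ < \ 0 .
\]
A semistable bundle of negative slope has no nonzero sections. Indeed, a nonzero section $\Oc \to E^\vee \otimes \Kc$ generates a line subbundle of degree at least $0$, which would contradict semistability. Hence $H^1 (C, E) = 0$.

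For (a), fix a point $x \in C$ and consider the exact sequence
\[
0 \ \to \ E(-x) \ \to \ E \ \to \ E|_x \ \to \ 0 .
\]
The bundle $E(-x)$ is semistable of slope $\mu (E) - 1 > 2g - 2$. The argument of (b) applies verbatim to it: $E(-x)^\vee \otimes \Kc$ is semistable of slope $2g - 1 - \mu(E) < 0$, so it has no sections, and therefore $H^1 (C, E(-x)) = 0$. The long exact sequence in cohomology then shows that the restriction map $H^0 (C, E) \to E|_x$ is surjective. Since $x$ was arbitrary, $E$ is generated.

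There is no real obstacle here. The only point needing care is the slope bookkeeping: the hypothesis $\mu(E) > 2g-1$, rather than merely $\mu(E) > 2g-2$, is exactly what makes the twist $E(-x)$ still satisfy the strict inequality needed for the vanishing in (b).
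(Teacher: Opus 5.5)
Your proof is correct and follows essentially the same route as the paper: Serre duality plus the vanishing of sections of a semistable bundle of negative slope, applied to $E(-x)$ to get surjectivity of $H^0(C,E)\to E|_x$. The only cosmetic difference is that the paper deduces (b) from $H^1(C,E(-p))=0$ via the surjection $H^1(C,E(-p))\to H^1(C,E)$, whereas you prove (b) directly by the same duality argument.
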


\begin{proof}
As $E$ is semistable, so are $E^*$ and $\Kc \otimes E^* (p)$ for any $p \in C$. Now
\[
\mu \left( \Kc \otimes E^* (p) \right) \ = \ 2g - 1 - \mu (E) \ < \ 0 ,
\]
and so $h^0 (C, \Kc \otimes E^* (p) ) = 0$. Thus by Serre duality $h^1 (C, E(-p)) = 0$, also, whence $H^0 (C, E) \to E|_p$ is surjective. Thus $E$ is generated. As $E|_p$ has support of dimension zero, $H^1 (C, E(-p)) \to H^1 (C, E)$ is surjective. Thus $h^1 (C, E) = 0$ also.
\end{proof}

\begin{proposition} \label{S0Irr}
Let $C$ be any curve of genus $g \ge 3$. Fix integers $r$, $d$ and $n$ satisfying
\[
r \ \ge \ 1 \quad \hbox{and} \quad d \ > \ r (2g - 1) \quad \hbox{and} \quad r + 1 \ \le \ n \ \le \ d - r(g - 1) .
\]
\begin{enumerate}
\item[(a)] The moduli space $\grdn$ is nonempty and irreducible, and contains $S_0 (r, d, n)$ as a dense open subset.
\item[(b)] The forgetful map $\Psi \colon S_0 (r, d, n) \dashrightarrow U_C (r, d)$ is a surjective morphism.
\end{enumerate}
\end{proposition}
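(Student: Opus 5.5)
The plan is to realise the relevant part of $\grdn$ as a Grassmannian bundle over the stable locus of $U_C(r,d)$, and then control the remaining strictly semistable part by a dimension count.

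First, the open subset with stable underlying bundle. Let $U^s \subseteq U_C(r,d)$ be the stable locus; it is nonempty, smooth and irreducible of dimension $r^2(g-1)+1$ because $g \ge 3$. For $E \in U^s$ we have $\mu(E) = d/r > 2g-1$. Lemma \ref{h1vanishing} then shows that $E$ is generated and $h^1(C,E) = 0$, so by Riemann--Roch
\[
h^0(C,E) \ = \ d - r(g-1) \ \ge \ n .
\]
By Lemma \ref{alphasstability}, every pair $(E,V)$ with $E$ stable and $V \in \Gr(n, H^0(E))$ is $\alpha$-stable for small $\alpha$. Since $h^0$ is constant, these pairs form, at least étale-locally or after passing to a suitable universal family, a Grassmannian bundle $\mathcal G \to U^s$ with fibre $\Gr(n, d-r(g-1))$. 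Hence $\mathcal G$ is irreducible of dimension
\[
r^2(g-1) + 1 + n\bigl(d - r(g-1) - n\bigr),
\]
which is the Brill--Noether number $\beta(r,d,n)$. Inside each fibre, the generating subspaces form a nonempty open set: it is open by Lemma \ref{BgenOpen}, and nonempty because $E$ is generated and $n \ge r+1$. To see nonemptiness, take general $V$ of dimension $n \ge r+1$ inside $H^0(E)$; the standard dimension count shows that the non-generating $V$ have codimension at least $n - r \ge 1$, and I would verify this by an incidence computation similar to Lemma \ref{NonVan}. This gives (b) directly, since every stable $E$, and hence a general point of $U_C(r,d)$, lies in the image of $\Psi$.

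Second, and this is the main obstacle, I must show that $\mathcal G$ is dense in $\grdn$. By Lemma \ref{alphasstability}, any $\alpha$-stable $(E,V)$ has $E$ semistable. It therefore suffices to show that the locus where $E$ is strictly semistable has dimension less than $\beta$ and cannot form a separate component. Every component of $\grdn$ has dimension at least $\beta$, by the deformation theory of coherent systems (Bradlow--García-Prada--Muñoz--Newstead). So it is enough to bound the strictly semistable part. For such $E$ I would use Jordan--Hölder filtrations with graded pieces $E_i$ of the same slope $d/r > 2g-1$. Each $E_i$ then has $h^1 = 0$, so $h^0(E) = d - r(g-1)$ again. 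The family of such $E$ has dimension at most $\dim U_C(r,d) - 1$: the count is $\sum r_i^2(g-1) + k + \sum_{i<j} (\dim \mathrm{Ext}^1 - 1)$, and since $g \ge 2$ one checks this is strictly below $r^2(g-1)+1$. Hence the pairs over strictly semistable bundles have dimension below $\beta$, so they lie in the closure of $\mathcal G$ and $\grdn$ is irreducible. I expect the Ext-counting to need care, and I would follow the analogous count in \cite{BMNO}.

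Finally, $S_0(r,d,n)$ is open by Lemma \ref{BgenOpen} and nonempty by the first step. It is therefore dense in the irreducible space $\grdn$, which gives (a). Surjectivity of $\Psi$ onto the stable locus was shown above; if $U_C(r,d)$ is interpreted to include semistable points, I would apply the same $h^1$-vanishing argument to a polystable representative.
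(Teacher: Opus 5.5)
Your proposal is correct in outline, but it reaches irreducibility by a different route from the paper. The paper never splits off the strictly semistable locus. For an arbitrary $\alpha$-stable $(E_0,V_0)$ it uses \cite[Prop.~2.6]{NR} to put $E_0$ and all stable bundles into one family $\cE \to T \times C$ over an irreducible base of semistable bundles. It then applies Lemma~\ref{h1vanishing} to \emph{all} of these bundles, so $(\pi_T)_*\cE$ is locally free and $\Gr(n,\cP)$ is irreducible. Hence $(E_0,V_0)$ and the open locus $G'$ of pairs with stable $E$ lie in the image of one irreducible variety, and $(E_0,V_0)\in\overline{G'}$.

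You build the Grassmannian bundle only over $U^s$, so you need two further inputs: the Bradlow--Garc\'{\i}a-Prada--Mu\~noz--Newstead lower bound $\dim\ge\beta$ on every component, and an upper bound $<\beta$ on the strictly semistable part. This gives you the extra information that $\grdn$ has the expected dimension $\beta$. The cost is heavier machinery and a count that, as you say, needs care: summing $\mathrm{Ext}^1$'s over Jordan--H\"older pieces must be done on an honest parameter space, and it is awkward for iterated extensions and repeated factors. It is cleaner to note that strictly semistable bundles form a proper closed substack of the irreducible stack of bundles of dimension $r^2(g-1)$, and that $\alpha$-stable pairs have only scalar automorphisms. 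Your own remark that $h^0(E)=d-r(g-1)$ for every semistable $E$ of this slope is exactly what lets the paper skip both inputs.

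On (b): for stable $E$ your argument is complete, and your incidence count showing that a general $V$ generates fills in a step the paper leaves implicit. Your suggested treatment of strictly semistable points via a polystable representative does not work in general, because $(E,V)$ must also be $\alpha$-stable. When $n=d-r(g-1)$ (so $V=H^0(E)$), this fails for every strictly semistable $E$. A subbundle $F$ of slope $d/r$ is semistable with $h^1(F)=0$, hence $h^0(F)/\rank F=h^0(E)/r$, and $(F,H^0(F))$ gives equality in the $\alpha$-stability inequality. The paper's one-line argument for (b) passes over the same point, so only surjectivity onto the stable locus is really established by either proof.
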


\begin{proof}
(a) Let $(E_0, V_0 )$ be any $\alpha$-stable coherent system of type $(r, d, n)$. Then $E_0$ is semistable by Lemma \ref{alphasstability}. Now by \cite[Prop 2.6]{NR}, there exists an irreducible variety $T$ and a family $\cE \to T \times C$ containing $E_0$ and all stable bundles of rank $r$ and degree $d$. If necessary, we replace $T$ by the open subset $\{ t \in T : \cE_t \hbox{ semistable} \}$.

By Lemma \ref{h1vanishing} (b), the sheaf $\cP := \left( \pi_T \right)_* \cE$ is locally free of rank $d - r (g-1)$ over $T$. As $T$ is irreducible, so too is the Grassmann bundle $\Gr (n, \cP) \to T$.

Now there is a natural classifying map $\Phi \colon \Gr (n, \cP ) \dashrightarrow \grdn$. By Lemma \ref{alphasstability}, if $\cE_t$ is stable, then $\Phi$ is defined for all $V \in \Gr (n, \cP)|_t$. Therefore, by our choice of $T$ above, $\Phi \left( \Gr (n, \cP) \right)$ contains
\[
\{ (E, V) \in \grdn : E \hbox{ stable} \} \ := \ G' .
\]
The subset $\Phi^{-1} ( G' )$ is open because stability is an open condition, and therefore irreducible because $\Gr (n, \cP)$ is irreducible. Thus $G'$ is irreducible. By openness of stability, $G'$ is open in all components of $\grdn$. It follows that $G'$ is dense in a component of $\grdn$.

Now since $(E_0, V_0)$ was assumed to be $\alpha$-stable, $\Phi$ is also defined at $V_0 \in \Gr (n, \cP)|_{E_0}$. 
 Again using irreducibility of $\Gr (n, \cP)$, we see that $(E_0, V_0)$ belongs to the same component of $\grdn$ as the one containing $G'$. As $(E_0, V_0)$ was chosen to be an arbitrary $\alpha$-stable system, we conclude that $\grdn$ has no other irreducible component.

For the rest: By Lemma \ref{h1vanishing} (a) and since $d > r (2g - 1)$, for every $(E, V) \in \grdn$ the ambient bundle $E$ is generated. As $n \ge r + 1$, by Lemma \ref{BgenOpen} the subset $S_0 (r, d, n)$ of $\grdn$ is open and dense.

(b) By Lemma \ref{alphasstability}, if $(E, V)$ is $\alpha$-stable then $E$ is a semistable vector bundle. Thus the forgetful map $\Psi$ is defined everywhere. Furthermore, since $d \ge n + r(g - 1)$, we have $h^0 (C, E) \ge n$ for every $E$ of rank $r$ and degree $d$, and therefore $\Psi$ is surjective.
\end{proof}

\subsubsection*{Generality of quotient coherent systems}

The following will be required for the induction proof in the next section. 
 Fix integers $g \ge 3$ and $r \ge 2$ and $n \ge r + 1$ satisfying (\ref{NumHyp}). 

Using \cite[Proposition 2.6]{NR} as above, we may choose a family $\cF \to \bT \times C$ of vector bundles of rank $r - 1$ and degree $d$, where every stable bundle $F \in U_C (r - 1, d)$ is represented, and $\bT$ is irreducible. If necessary, we replace $\bT$ with the open subset parametrising stable $F$. As $d > 0$, by stability $h^0 (C, \cF^\vee_t ) = 0$ for all $t$, and so $R^1 \left( \pi_\bT \right)_* \cF^\vee$ is locally free over $\bT$. Let $R \to \bT$ be the associated projective bundle; this contains all nontrivial extensions
\begin{equation} \label{extE}
0 \ \to \ \Oc \ \to \ E \ \to \ \cF_t \ \to \ 0
\end{equation}
as $t$ ranges over $\bT$. Since $\bT$ is irreducible, so is $R$.

We claim now that a general element of $R$ is a stable vector bundle. Let $\cF_t$ be a general stable bundle, and let $N \in \Pic^0 (C)$ be general. Perturbing $\cF_t$ and $N$ if necessary, we may assume that $\cF_t \otimes N$ is also general in $U_C (r - 1, d)$. As $\deg N < \mu (F)$, by \cite{RT} a general extension $0 \to N \to E' \to \cF_t \otimes N \to 0$ is stable. But this is equivalent to stability of $0 \to \Oc \to E' \otimes N^{-1} \to \cF_t \to 0$. Thus we may assume that a general extension of the form (\ref{extE}) is a stable vector bundle over $C$.

Therefore, by Lemma \ref{h1vanishing}, a general $E \in R$ is generated and $h^1 (C, E) = 0$. We consider the open dense subset
\[
R' \ := \ \{ E \in R : h^1 (C, E) = 0 \} . 
\]
Now by \cite{Lan}, there is a universal extension $\cE \to R' \times C$ whose restriction to $\{ (\delta, t) \} \times C$ is isomorphic as a vector bundle to the extension $0 \to \Oc \to E \to \cF_t \to 0$ defined by the class $\delta \in \PP H^1 (C, \cF_t^\vee )$. By the vanishing of $h^1 (C, E)$, the sheaf $\cQ := \left( \pi_{R'} \right)_* \cE$ is locally free of rank $d - r(g - 1)$ over $R'$. We have a Grassmann bundle $\Gr (n, \cQ) \to R'$, with universal bundle $\cU \to \Gr (n, \cQ)$. As $R'$ is irreducible, so are $\Gr (n, \cQ )$ and the total space of $\cU$.

Now elements of $\cU \to \Gr (n, \cQ)$ correspond to triples $(E, V, \sigma)$ where $E$ is an extension of the form (\ref{extE}), and $V \subseteq H^0 (C, E)$ is a subspace of dimension $n$, and $\sigma \in V$. We define a rational map $\Gamma \colon \PP \cU \dashrightarrow S_0 (r - 1, d, n - 1)$ by
\begin{equation} \label{defnGamma}
(E, V, \sigma ) \ \mapsto \ \left( \frac{E}{\sigma ( \Oc )} , \frac{V}{\C \cdot \sigma} \right) .
\end{equation}
The point $\Gamma (E, V, \sigma )$ is defined if $\sigma$ is nowhere vanishing, $V$ generates $E$, and $E / \sigma ( \Oc )$ is a stable vector bundle.

\begin{proposition}
The map $\Gamma$ is dominant.
\end{proposition}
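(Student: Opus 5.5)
Since $S_0(r-1, d, n-1)$ is irreducible by Proposition \ref{S0Irr} (a), it is enough to show that the image of $\Gamma$ contains a dense open subset of $S_0(r-1,d,n-1)$. We first check that Proposition \ref{S0Irr} applies to $(r-1,d,n-1)$. The hypothesis (\ref{NumHyp}) gives $d \ge (n+r-1)(g-1)+n-1$. Since $n \ge r+1$ and $g\ge 3$, this implies $d > (r-1)(2g-1)$. It also gives $n-1 \le d-(r-1)(g-1)$, with plenty of room to spare, and of course $n-1\ge r$. Hence $S_0(r-1,d,n-1)$ is irreducible, and $\Psi$ maps it onto $U_C(r-1,d)$.

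Now fix a general point $(F,W)$ of $S_0(r-1,d,n-1)$, with $F$ stable, and lift it to a point of $\PP\cU$. Since $\mathbb{T}$ contains every stable $F$, there is a $t$ with $\cF_t\cong F$. The key step is to find a nontrivial extension $0\to\Oc\to E\to F\to 0$ lying in $R'$ such that all of $W$ lifts to $H^0(C,E)$. The obstruction to lifting $w\in W$ is the coboundary $\delta\cdot w \in H^1(C,\Oc)$, where $\delta \in H^1(C,F^\vee)$ is the extension class. So $W$ lifts exactly when $\delta$ lies in the kernel of the dual of the multiplication map
\[
\mu_W \colon W\otimes H^0(C,\Kc) \ \to \ H^0(C,\Kc\otimes F) .
\]
That is, we need $\delta$ to be orthogonal to $\Image \mu_W$. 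The dimensions here are
\[
\dim \bigl( W\otimes H^0(C,\Kc) \bigr) = (n-1)g \quad\hbox{and}\quad h^1(C,F^\vee)=h^0(C,\Kc\otimes F)=d+(r-1)(g-1).
\]
The bound (\ref{NumHyp}) gives $d+(r-1)(g-1) > (n-1)g$, since
\[
(n+r-1)(g-1)+n-1+(r-1)(g-1) \ \ge \ (n-1)g+1 .
\]
This holds because the left side minus $(n-1)g$ equals $2(r-1)(g-1)$, which is positive for $r\ge2$. So $\mu_W$ cannot be surjective, and a nonzero $\delta$ annihilating its image exists. This count is exactly where the numerical hypothesis is used.

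With such a $\delta$, choose lifts $\tilde W\subset H^0(C,E)$ and put $V=\tilde W\oplus\C\sigma$, where $\sigma$ is the section $\Oc\to E$. Then $\dim V=n$, $\sigma$ is nowhere vanishing, and $V$ generates $E$ because $W$ generates $F$ and $\sigma$ generates the subbundle $\Oc$. Also $\Gamma(E,V,\sigma)=(F,W)$.

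The main obstacle is to guarantee that this $E$ lies in $R'$, i.e.\ that $h^1(C,E)=0$. The open sets involved also have to be compatible: $R'$ is dense in $R$, but the locus of $\delta$ annihilating $\Image\mu_W$ is a proper linear subspace $\PP(\Image\mu_W)^\perp$, which might a priori sit inside $R\setminus R'$. We would handle this as follows. From $0\to\Oc\to E\to F\to0$, the condition $h^1(C,E)=0$ is equivalent to surjectivity of the coboundary $H^0(C,F)\to H^1(C,\Oc)$ given by cup product with $\delta$. Dually, this says the map $H^0(C,\Kc)\to H^0(C,\Kc\otimes F)^\ast$, $\omega\mapsto (s\mapsto\langle\delta,\omega s\rangle)$, is injective. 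Since $h^0(C,F)=d-(r-1)(g-1)$ is much larger than $n-1$, we expect the constraint on $\delta$ imposed by $W$ to be compatible with this injectivity for general $\delta$ in the allowed subspace. To prove it, we would enlarge $W$ to a generic subspace $W'\subset H^0(C,F)$ of dimension $n-1+g$. The dimension count above still leaves a nonzero $\delta$ killing $\Image\mu_{W'}$, and the bound has enough slack for this provided $d$ is large enough, which we would check against (\ref{NumHyp}). Since $W'$ now lifts to $E$, the image of $H^0(C,E)\to H^0(C,F)$ has dimension at least $n-1+g$; comparing this with $h^0(C,F)$ and $\chi(E)$ should force the coboundary to be onto, giving $h^1(C,E)=0$.

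Stability of $E$ is not needed for $\Gamma$ to be defined. Since $\Gamma$ is defined on an open subset of the irreducible space $\PP\cU$, and its image contains a general point $(F,W)$, the map $\Gamma$ is dominant.
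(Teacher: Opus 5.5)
\textbf{There is a genuine gap in the step $h^1(C,E)=0$.} Your setup agrees with the paper's. The condition $\delta\perp\Image\mu_W$ is the dual form of the paper's $\delta\in\Ker(\cup_W)$, and the count giving a nonzero such $\delta$ is the same. The gap is the step you flag yourself, and the argument you sketch for it runs the wrong way. From $H^0(C,E)\to H^0(C,F)\xrightarrow{\partial}H^1(C,\Oc)\to H^1(C,E)\to 0$ (using $h^1(C,F)=0$) you get
\[
h^1(C,E)\;=\;g-\rank\partial\;=\;g-h^0(C,F)+\dim\Ker\partial .
\]
Making a larger space $W'$ lift only enlarges $\Ker\partial=\Image\bigl(H^0(C,E)\to H^0(C,F)\bigr)$, so it can only increase $h^1(C,E)$. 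Combined with $\chi(E)$, the bound $\dim\Ker\partial\ge n-1+g$ yields a \emph{lower} bound on $h^1(C,E)$, never its vanishing. Vanishing requires the liftable sections to have dimension exactly $h^0(C,F)-g$. That is an upper bound, and no enlargement of $W$ can produce it. Independently, the inequality $d+(r-1)(g-1)>(n-1+g)g$ you would need fails for the smallest $d$ allowed by (\ref{NumHyp}), for example when $r=2$.

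\textbf{The missing idea, which the paper supplies, is to bound the locus of bad extension classes.} One then shows that the linear space of admissible $\delta$ is not contained in it. Your own dual reformulation leads straight there; note that the target of your dual map should be $H^0(C,F)^*$, not $H^0(C,\Kc\otimes F)^*$. In that form, $h^1(C,E)\neq0$ if and only if $\delta\perp\omega\cdot H^0(C,F)$ for some nonzero $\omega\in H^0(C,\Kc)$. Multiplication by $\omega$ is injective and $h^1(C,F)=0$. Hence the annihilator of $\omega\cdot H^0(C,F)$ in $H^1(C,F^\vee)$ has dimension $h^0(C,\Kc\otimes F)-h^0(C,F)=(r-1)(2g-2)$. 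The union over $[\omega]\in|\Kc|$ is therefore a cone of dimension at most $(2r-1)(g-1)$. The paper phrases this via $\Ker\bigl(\tau^*\colon H^1(C,F^\vee)\to H^1(C,F^\vee\otimes\Kc)\bigr)$ and the divisor $(\tau)$. A general $\delta\in(\Image\mu_W)^\perp$ then gives $h^1(C,E)=0$, provided $\dim(\Image\mu_W)^\perp>(2r-1)(g-1)$.

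\textbf{Arithmetic and the boundary case.} Your difference is miscomputed. In fact $d+(r-1)(g-1)-(n-1)g=d-(n-r)(g-1)-(n-1)$, which (\ref{NumHyp}) bounds below by $(2r-1)(g-1)$, not $2(r-1)(g-1)$. This is exactly the bad-locus bound, so the comparison is strict only for $d>(n+r-1)(g-1)+n-1$. The paper's display (\ref{dimKerCupW}) has $+(n-1)$ where $-(n-1)$ is correct. So at the boundary value $d=(n+r-1)(g-1)+n-1$, the paper's comparison is also only an equality. That case needs a small extra argument, or $d$ one larger.
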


\begin{proof}
Let $(F, W)$ be a general element of $S_0 (r - 1, d, n - 1)$. It will suffice to show that there exists an extension $0 \to \Oc \to E \to F \to 0$ where $h^1 (C, E) = 0$ and $W \subseteq \Image \left( H^0 (C, E) \to H^0 (C, F) \right)$. (Note that any such $(E, V)$ is necessarily generated.)

Now $W \subseteq H^0 (C, F)$ lifts to an extension $E$ of the type considered if and only if the extension class $\delta (E)$ belongs to the kernel of the cup product map
\[
\cup_W \colon H^1 (C, F^\vee ) \ \to \ \Hom ( H^0 (C, F) , H^1 ( C, \Oc ) ) \ \to \ \Hom ( W, H^1 (C, \Oc ) ) .
\]
The kernel of $\cup_W$ has dimension at least $h^1 (C, F^\vee ) - (n - 1)g$, which by Riemann--Roch equals
\begin{equation} \label{dimKerCupW}
d + (r - 1)(g - 1) - (n - 1)g \ = \ d - (n - r)(g - 1) + n - 1 .
\end{equation}
In order to ensure that this contains a class $\delta (E)$ where $h^1 (C, E) = 0$, in view of Serre duality we will estimate the dimension of
\[
\{ \delta (E) \in H^1 (C, F^\vee ) : h^0 (C, \Kc \otimes E^\vee ) \ne 0 \} .
\]
As $h^1 (C, F) = 0$ by Lemma \ref{h1vanishing}, we have $h^0 (C, \Kc \otimes F^\vee ) = 0 $. Thus any nonzero section of $\Kc \otimes E^\vee$ corresponds to a lifting
\[ \xymatrix{
0 \ar[r] & F^\vee \ar[r] & E^\vee \ar[r] & \Oc \ar[r] & 0 \\
 & & & \Kc^\vee \ar[u]_\tau \ar@{..>}[ul] &
} \]
where $\tau \in H^0 (C, \Kc)$ is nonzero. This is equivalent to the condition that
\[
\delta (E) \ \in \ \Ker \left( \tau^* \colon H^1 (C, F^\vee ) \ \to \ H^1 (C, F^\vee \otimes \Kc ) \right) .
\]
The latter kernel is a quotient of $H^0 (C, F^\vee|_{(\tau)} )$, where $(\tau)$ is the divisor of $\tau$. Thus $\Ker ( \tau^* )$ has dimension at most $\rank F \cdot \deg \Kc = (r - 1)(2g - 2)$, and
\[
\bigcup_{(\tau) \in | \Kc |} \Ker (\tau^*)
\]
has dimension at most $(r - 1)(2g - 2) + (g - 1) = (2r - 1)(g - 1)$. A computation using (\ref{NumHyp}) shows that this is strictly less than (\ref{dimKerCupW}). 
 We conclude that there exists an extension $0 \to \Oc \to E \to F \to 0$ such that $W \subseteq \Image \left( H^0 (C, E) \to H^0 (C, F) \right)$ and $h^1 (C, E) = 0$. Let $\tW \subset H^0 (C, E)$ be a subspace mapping isomorphically to $W$. Write $\sigma$ for the injection $\Oc \isom \Ker (E \to F)$, and let $V$ be the span of $\tW$ and $\C \cdot \sigma$. Then $\Gamma (E, V, \sigma) = (F, W)$ (cf.\ (\ref{defnGamma})).

As $(F, W)$ was chosen to be generic, we conclude that $\Gamma$ is dominant.
\end{proof}

\begin{corollary} \label{GammaDom}
For a general $(E, V) \in S_0 (r, d, n)$ and general $\sigma \in V$, the quotient $\left( \frac{E}{\sigma ( \Oc )} , \frac{V}{\C \cdot \sigma} \right)$ is a general element of $S_0 (r - 1, d, n - 1)$.
\end{corollary}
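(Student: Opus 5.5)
The plan is to deduce the corollary from the dominance of $\Gamma$ together with the irreducibility statements of Proposition \ref{S0Irr}. Everything is phrased on the irreducible variety $\PP \cU$. First, I would identify a dense open subset of $\PP \cU$ mapping to $S_0 (r, d, n)$. There is a natural classifying map $\Phi \colon \PP \cU \dashrightarrow \grdn$ sending $(E, V, \sigma) \mapsto (E, V)$. It is defined wherever $E$ is stable, by Lemma \ref{alphasstability}. It lands in $S_0 (r, d, n)$ wherever $V$ generates $E$, and this is open by Lemma \ref{BgenOpen}. A general point of $R'$ is a stable bundle, as shown above. For such $E$, with $\mu (E) > 2g - 1$ by (\ref{NumHyp}), a general $n$-dimensional $V$ generates $E$ since $n \ge r + 1$. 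Hence the locus $\cU^\circ \subseteq \PP \cU$ where $\Phi$ and $\Gamma$ are both defined, with values in $S_0 (r, d, n)$ and $S_0 (r - 1, d, n - 1)$ respectively, is a nonempty open subset. It is therefore dense, because $\PP \cU$ is irreducible.

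Next I would show that $\Phi$ is dominant onto $S_0 (r, d, n)$, with fibre over $(E, V)$ essentially an open subset of $\PP V$. Given a general $(E, V) \in S_0 (r, d, n)$, Lemma \ref{NonVan} (applicable since $\rank E = r \ge 2 > 1 = \dim C$) shows that a general $\sigma \in V$ is nowhere vanishing. Then $E / \sigma (\Oc)$ is a vector bundle of rank $r - 1$ and degree $d$, so $E$ is an extension of the form (\ref{extE}). One point needs care: we must check that the quotient is stable for general $\sigma$, so that $E$ actually lies in $R'$. I would handle this indirectly. The set of $(E, V, \sigma)$ with stable quotient is open and, by the construction of $R$, nonempty. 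Moreover, $S_0 (r, d, n)$ is irreducible by Proposition \ref{S0Irr} (a), whose hypotheses follow from (\ref{NumHyp}). Therefore the image $\Phi (\cU^\circ)$ is a constructible set that contains a nonempty open subset of $S_0 (r, d, n)$. Consequently a general $(E, V)$ is in the image, and for it the set of $\sigma \in \PP V$ with $(E, V, \sigma) \in \cU^\circ$ is a nonempty open subset of $\PP V$.

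Finally, I would combine the two maps. Since $\Gamma$ is dominant, there is a dense open $\cU^{\circ\circ} \subseteq \cU^\circ$ such that $\Gamma(\cU^{\circ\circ})$ avoids any prescribed proper closed subset $Z \subset S_0 (r - 1, d, n - 1)$. The preimage $\Gamma^{-1}(Z)$ is a proper closed subset of $\cU^\circ$. Its image under $\Phi$ cannot contain a general fibre, since the fibres $\PP V$ are irreducible and the complement of $\Gamma^{-1}(Z)$ is dense. So for general $(E, V)$, the fibre meets $\Gamma^{-1}(Z)$ only in a proper closed subset of $\PP V$. This is exactly the statement that for general $\sigma \in V$ the quotient avoids $Z$, i.e.\ is general. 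The main obstacle is the second step. There one must make sure that the generic point of $S_0 (r, d, n)$ genuinely arises from $R'$, i.e.\ that $\Phi$ is dominant. I would settle this by the irreducibility argument above, using the dimension-free fact that a nonempty open subset of $\PP \cU$ maps into the irreducible $S_0 (r, d, n)$ with constructible image. I would then check that this image contains an open set because every general $(E, V)$ admits a nowhere vanishing $\sigma$ with stable quotient.
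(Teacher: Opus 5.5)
Your proposal is correct and follows essentially the same route as the paper. You restrict to the open dense part of $\PP \cU$ where $E$ is stable and $V$ generates, so that $(E, V) \in S_0 (r, d, n)$; you use the dominance of $\Gamma$; and you pass to a general $(E, V)$ via the irreducibility of $S_0 (r, d, n)$. The only difference is that you spell out what the paper compresses into its last line: that a general $(E, V)$ really arises from $\PP \cU$ (openness of the stable-quotient condition plus irreducibility), and that genericity in $\sigma$ then follows fibrewise, because $\Gamma$ depends only on $(E, V, \sigma)$ and $\PP V$ is irreducible.
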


\begin{proof}
Let $R'_\st \subseteq R'$ be the open dense sublocus of extensions $0 \to \Oc \to E \to \cF_t \to 0$ such that $E$ is stable. As $\Gamma$ is dominant, so is its restriction to the open dense sublocus
\begin{equation} \label{AuxLocus}
\PP \cU|_{\Gr \left( n, \cQ|_{R'_\st} \right)} \ \cong \ \{ (E, V, \sigma ) \in \PP \cU : E \hbox{ is stable} \}
\end{equation}
and the image by $\Gamma$ of a general element of this set can be assumed to be general in $S_0 (r - 1, d, n - 1)$.

But by Lemma \ref{alphasstability}, for any element $(E, V, \sigma)$ of (\ref{AuxLocus}), the system $(E, V)$ is $\alpha$-stable for $\alpha$ close to zero. For a general such triple, $V$ also generates $E$, and therefore $(E, V)$ defines an element of $S_0 (r, d, n)$. Thus the statement of the corollary holds already for those $(E, V) \in S_0 (r, d, n)$ where $(E, V)$ appears in $\Gr \left( n, \cQ|_{R'_\st} \right)$. As $S_0 (r, d, n)$ is irreducible by Proposition \ref{S0Irr} (a), the statement follows.
\end{proof}

\section{Stability of kernel bundles}

We are now in a position to prove our main result.

\begin{proof}[Proof of Theorem \ref{main}]
We proceed by induction on $r$. For $r = 1$: Let $(L, V)$ be a generated coherent system of type $(1, d, n)$ on $C$. Putting $E = L$, we have $\pe = C$ and $\opeo = L$. By \cite[Theorem 1.1]{BN}, if $(L, V)$ is general then $\mlv \to C$ is stable with respect to any polarisation on $C$ (cf.\ {\S} \ref{subsection:Stability}).

Suppose now that $r \ge 2$. By Proposition \ref{S0Irr} (a) \& (b), we may assume that if $(E, V)$ is general in the irreducible space $S_0 (r, d, n)$, then $E$ is general in moduli; and then clearly $V$ may be assumed to be a general subspace of $H^0 (C, E)$. Moreover, from \cite[Proposition 2.3.1]{HL} it follows that the locus
\begin{equation} \label{StableInS0}
\left\{ (E, V) \in S_0 (r, d, n) : \moev \hbox{ is $\opeo$-stable} \right\}
\end{equation}
is open in $S_0 (r, d, n)$. Thus it will suffice to prove that (\ref{StableInS0}) is nonempty.

Let $(E, V)$ be a general element. We will adapt a strategy employed in \cite[{\S} 4]{Cam} and \cite[pp.\ 74-75]{ELM}, using also results of \cite{Mis}, to show that $\moev$ is $\opeo$-stable. To ease notation, we write $X := \pe$ and $L := \opeo$, so that $\moev = \mlv$. Let $G \subset \mlv$ be a torsion free subsheaf.

We now consider hyperplane sections $Y$ of $X$ defined by general elements $\sigma \in V$. Firstly, as $V$ generates $E$, as a subspace of $H^0 (X, L)$ also $V$ generates $L$. Thus by Bertini's theorem, we may assume that
\begin{equation} \label{one}
\hbox{a general hyperplane section $Y \in \pv$ is smooth and integral.}
\end{equation}
Therefore, by the argument given in \cite{MooS}, the restricted map $G|_Y \to \mlv|_Y$ is again injective. As $\mlv$ is locally free, $\mlv|_Y$ is locally free on $Y$, and in particular torsion free. By the aforementioned injectivity of $G|_Y \to \mlv|_Y$,
\begin{equation} \label{two}
\hbox{the restriction $G|_Y$ to a general hyperplane section $Y$ is torsion free.}
\end{equation}
In particular, $G|_Y$ is locally free on $Y$ away from a subset of codimension at least $2$. Write $\iota \colon Y \hookrightarrow X$ for the inclusion. Unwinding the definitions of $\det G$ and $\det (G|_Y)$ (using the fact that $Y$ and $X$ are integral and locally factorial; see for example \cite[p.\ 129]{Har}), we see that $\det (G|_Y) \cong \iota^* (\det G)$, and in particular $c_1 (G|_Y) = \iota^* c_1 (G)$. Following the argument of \cite[Lemma 2.1]{Mis}, we now obtain for a general $Y \in \PP V$ that
\begin{equation} \label{SlopesEqual}
\deg_{\cL} (G) \ = \ \int_X c_1 (G) \cdot c_1 ( \cL )^{r-1} \ = \ \int_Y c_1 \left( G|_Y \right) \cdot c_1 ( L|_Y )^{r-2} \ = \ \deg_{L|_Y} \left( G|_Y \right) .
\end{equation}
(Note that when $r = 2$, so that $Y$ is a curve, the above reduces to
\[
\deg_{\cL} (G) \ = \ \int_X c_1 (G) \cdot c_1 ( \cL ) \ = \ \int_Y c_1 \left( G|_Y \right) \ = \ \deg \left( G|_Y \right) .)
\]

Next, by (\ref{NumHyp}), in particular $d \ge (n + r - 3)(g - 1) + n - 2$. 
 Thus, by induction and by Corollary \ref{GammaDom}, we may assume that
\begin{multline} \label{three}
\hbox{for a general $(E, V) \in S_0 (r, d, n)$ and general $\sigma \in V$, the kernel bundle} \\
\hbox{$\mofw$ is $\opfo$-stable, where } (F, W) = \left( \frac{E}{\sigma (\Oc )} , \frac{V}{\C \cdot \sigma} \right) .
\end{multline}

Let now $x \in X$ be a general point. Then we may assume that $G$ is locally free at $x$. Let $Y \in \PP V$ be the divisor defined by an element
\[
\sigma \ \in \ \mlv|_x \ = \ V \cap H^0 ( X , L \otimes \cI_x )
\]
satisfying the generality conditions (\ref{one}), (\ref{two}) and (\ref{three}), and also lying outside the proper subspace $G|_x \subset \mlv|_x$. By (\ref{one}), as a variety, $Y \cong \pf$ where $F$ is the vector bundle defined by $0 \to \Oc \xrightarrow{\sigma} E \to F \to 0$. Set $W := V / \C \cdot \sigma$. 

Since $\sigma$ vanishes on $Y$, noting that $\opeo|_Y = \opfo$, we obtain an exact diagram of sheaves on $Y$:
\[ \xymatrix{
\Oy \otimes \C \cdot \sigma \ar[d] \ar[r]^\sim & \Oy \otimes \C \cdot \sigma \ar[d] & \\
\mlv|_Y \ar[r] \ar[d] & ( \Ox \otimes V)|_Y \ar[r] \ar[d] & L|_Y \ar[d]^\wr \\
\mofw \ar[r] & \Oy \otimes W \ar[r] & \opfo
} \]
As $\sigma$ lies outside $G|_x$, the composed map $G|_Y \to \mlv|_Y \to \mofw$ is injective. Now $c_1 ( \mofw ) = - c_1 ( \opfo )$. By the stability hypothesis (\ref{three}), applying Lemma \ref{c1Lr} to both $(E, V)$ and $(F, W)$ we obtain
\begin{multline*}
\mu_{\opfo} \left( G|_Y \right) \ \le \ \mu_{\opfo} ( \mofw ) \ = \ -\frac{c_1 ( \opfo )^{r-1}}{\dim W - 1} \ = \\
 -\frac{d}{n-2} \ < \ -\frac{d}{n-1} \ = \ -\frac{c_1 ( L )^r}{n-1} \ = \ \mu_L ( \mlv ) .
\end{multline*}

In view of (\ref{SlopesEqual}), this implies that $\mu_L (G) < \mu_L (\mlv)$. It follows that $\mlv$ is $\cL$-stable, as desired.
\end{proof}


\begin{thebibliography}{99}


\bibitem [Bea03] {Bea} A.\ Beauville: \textsl{Some stable vector bundles with reducible theta divisor}, Manuscr.\ Math.\ \textbf{110}, no.\ 3 (2003), 343--349.

\bibitem [BBN15] {BBN} U.\ N.\ Bhosle, L.\ Brambila-Paz and P.\ E.\ Newstead: \textsl{On linear series and a conjecture of D.\ C.\ Butler}, Int.\ J.\ Math.\ \textbf{26}, no.\ 2 (2015), Article ID 1550007, 18 pp.

\bibitem [BMNO19] {BMNO} L.\ Brambila-Paz, O.\ Mata-Guti\'errez, P.\ E.\ Newstead and A.\ Ortega: \textsl{Generated coherent systems and a conjecture of D.\ C.\ Butler}, Int.\ J.\ Math.\ \textbf{30}, no.\ 5 (2019), Article ID 1950024, 25 pp.

\bibitem [BN26] {BN} L.\ Brambila-Paz and P.\ E.\ Newstead: \textsl{New examples of twisted Brill--Noether loci II}, arXiv:2601.11855 (2026).

\bibitem [But94] {But94} D.\ C.\ Butler: \textsl{Normal generation of vector bundles over a curve}, J.\ Diff.\ Geom.\ \textbf{39}, no.\ 1 (1994), 1--34.

\bibitem [But97] {But97} D.\ C.\ Butler: \textsl{Birational maps of moduli of Brill--Noether pairs}, arXiv:alg-geom/9705009 (1997).

\bibitem [CL21] {CL} F.\ Caucci and M.\ Lahoz: \textsl{Stability of syzygy bundles on abelian varieties}, Bull.\ Lond.\ Math.\ Soc.\ \textbf{53}, no.\ 4 (2021), 1030--1036.

\bibitem [Cam12] {Cam} C.\ Camere: \textsl{About the stability of the tangent bundle of $\PP^n$ restricted to a surface}, Math.\ Z.\ \textbf{271}, no.\ 1--2 (2012), 499--507.

\bibitem [CH24] {CH} A.\ Castorena and G.\ H.\ Hitching: \textsl{On $\alpha$-stability and linear stability of generated coherent systems}, arXiv:2409.12794 (2024); to appear in Kyoto J.\ of Math.

\bibitem [CH10] {ChH} I.\ Choe and G.\ H.\ Hitching: \textsl{Secant varieties and Hirschowitz bound on vector bundles over a curve}, Manuscripta Math.\ \textbf{133}, no.\ 3--4 (2010), 465--477.

\bibitem [ELM13] {ELM} L.\ Ein, R.\ Lazarsfeld and Y.\ Mustopa: \textsl{Stability of syzygy bundles on an algebraic surface}, Math.\ Res.\ Lett.\ \textbf{20}, no.\ 1 (2013), 73--80.

\bibitem [Ful98] {Ful} W.\ Fulton: \textsl{Intersection theory}, 2nd.\ ed., Ergebnisse der Mathematik und ihrer Grenzgebiete, 3.\ Folge 2. Berlin: Springer, 1998.



\bibitem [Har89] {Har} R.\ Hartshorne: \textsl{Stable reflexive sheaves}, Math.\ Ann.\ \textbf{254} (1980), 121--176.

\bibitem [HL10] {HL} D.\ Huybrechts and M.\ Lehn: \textsl{The geometry of moduli spaces of sheaves}, 2nd ed. Cambridge: Cambridge University Press, 2010.

\bibitem [JR25] {JR} C.\ Jiang and P.\ Ren: \textsl{Stability of syzygy bundles on varieties of Picard number one}, Int.\ Math.\ Res.\ Not.\ 2025, no.\ 8, article ID rnaf098, 11 pp.

\bibitem [KN95] {KN} A.\ D.\ King and P.\ E.\ Newstead: \textsl{Moduli of Brill--Noether pairs on algebraic curves}, Int.\ J.\ Math.\ \textbf{6}, no.\ 5 (1995), 733--748.

\bibitem [Kle74] {Kle} S.\ L.\ Kleiman: \textsl{The transversality of a general translate}, Compos.\ Math.\ \textbf{28} (1974), 287--297.

\bibitem [Lan83] {Lan} H.\ Lange: \textsl{Universal families of extensions}, J.\ Algebra \textbf{83} (1983), 101--112.



\bibitem [MS23] {MS} R.\ M.\ Mir\'o-Roig and M.\ Salat-Molt\'o: \textsl{Syzygy bundles of non-complete linear systems: stability and rigidness}, Mediterr.\ J.\ Math.\ \textbf{20} (2023), no.\ 5, paper no.\ 265, 21 pp.

\bibitem [Mis21] {Mis} E.\ C.\ Mistretta: \textsl{A remark on stability and restrictions of vector bundles to hypersurfaces}, Bol.\ Soc.\ Mat.\ Mex., III.\ Ser.\ \textbf{27}, no.\ 2 (2021), Paper No. 36, 7 pp.

\bibitem [MooS17] {MooS} \texttt{https://math.stackexchange.com/questions/2440449/restriction-of-an-injection-of-sheaves}, 2017.

\bibitem [NR75] {NR} M.\ S.\ Narasimhan and S.\ Ramanan: \textsl{Deformations of the moduli space of vector bundles over an algebraic curve}, Ann.\ Math.\ (2) \textbf{101} (1975), 391--417.

\bibitem [Popa99] {Popa} M.\ Popa: \textsl{On the base locus of the generalized theta divisor}, C.\ R.\ Acad.\ Sci., Paris, S\'er.\ I, Math.\ \textbf{329}, no.\ 6 (1999), 507--512.

\bibitem [Rek24] {Rek} N.\ Rekuski: \textsl{Stability of kernel sheaves associated to rank one torsion-free sheaves}, Math.\ Z.\ \textbf{307}, no.\ 1 (2024), paper no.\ 2, 18 pp.

\bibitem [RT99] {RT} B.\ Russo and M.\ Teixidor i Bigas: \textsl{On a conjecture of Lange}, J.\ Algebr.\ Geom.\ \textbf{8}, no.\ 3 (1999), 483--496.

\bibitem [Sar26] {Sar} S.\ Sarkar: \textsl{Stable and unstable syzygy bundles on certain Picard rank one varieties}, arXiv:2607.10585 (2026).

\bibitem [TZ22] {TZ} H.\ Torres-L\'opez, and A.\ G.\ Zamora: \textsl{$H$-stability of syzygy bundles on some regular algebraic surfaces}, Beitr.\ Alg.\ Geom.\ \textbf{63}, no.\ 3 (2022), 589--598.

\end{thebibliography}
\end{document}